\documentclass[a4paper,12pt,reqno]{amsart}
\usepackage{setspace} 
\usepackage{amsmath, amsfonts,amsthm,amssymb,mathrsfs,cases}
\usepackage{enumerate}
\usepackage{graphicx}
\usepackage{ascmac}
\usepackage[arrow,matrix]{xy}
\usepackage{color}
\usepackage{array}
\usepackage{pifont}
\usepackage{longtable}
\usepackage{mathtools}

\theoremstyle{definition}
\newtheorem{theorem}{Theorem}[section]
\newtheorem*{theorem*}{Theorem}

\newtheorem*{definition*}{Definition}

\newtheorem*{lemma*}{Lemma}

\newtheorem*{example*}{Example}
\newtheorem{proposition}[theorem]{Proposition}
\newtheorem*{proposition*}{Proposition}

\newtheorem*{corollary*}{Corollary}
\newtheorem{remark}[theorem]{Remark}
\newtheorem*{remark*}{Remark}

\newtheorem{maintheorem}{Theorem}

\newcommand{\ctext}[1]{\raise0.2ex\hbox{\textcircled{\scriptsize{#1}}}}

\DeclareMathOperator{\Spin}{Spin}
\DeclareMathOperator{\Pin}{Pin}

\DeclareMathOperator{\Id}{Id}
\DeclareMathOperator{\tr}{tr}

\DeclareMathOperator{\SO}{SO}

\DeclareMathOperator{\G}{G}

\DeclareMathOperator{\SU}{SU}

\DeclareMathOperator{\Sp}{Sp}

\DeclareMathOperator{\ad}{ad}

\title{Left-invariant Rarita-Schwinger fields on 3-dimensional Lie groups}

\author{Soma Ohno}

\address{Soma Ohno, Faculty of Human Informatics, Aichi Shukutoku University, 2-9 Katahira, Nagakute, Aichi, 480-1197, Japan.}
\email{sohno@asu.aasa.ac.jp}

\begin{document}

\begin{abstract}
We classify left-invariant Rarita-Schwinger fields on 2- and 3-dimensional Lie groups with left-invariant Riemannian metrics. The left-invariant Rarita-Schwinger equation is reduced to an algebraic system on the associated metric Lie algebra. We show that, in dimension 2, non-trivial examples occur only in the flat abelian case. In dimension 3, apart from the flat abelian case, the only non-trivial examples occur on $\SU(2)$ equipped with a special left-invariant metric, for which the space of left-invariant Rarita-Schwinger fields has complex dimension 2. This metric is a Berger metric on $S^3$.
\end{abstract}

\maketitle

\makeatletter
  \renewcommand{\theequation}{
  \thesection.\arabic{equation}}
 \@addtoreset{equation}{section}
\makeatother

\section{Introduction}
The Rarita-Schwinger equation was introduced by W. Rarita and J. Schwinger \cite{RaritaSchwinger} as a relativistic wave equation for particles of spin-$3/2$. Since then, it has played an important role in mathematical physics, especially in the study of higher spin fields and gravitinos. From the viewpoint of spin geometry, the Rarita-Schwinger operator arises as the spin-$3/2$ component of the twisted Dirac operator and is a generalization of the classical Dirac operator. Rarita-Schwinger fields are spin-$3/2$ fields satisfying the Rarita-Schwinger equation. 

In Riemannian spin geometry, the Rarita-Schwinger operator and Rarita-Schwinger fields have been studied in relation to special geometric structures. Y. Homma and U. Semmelmann \cite{HommaSemmelmann} investigated the kernel of the Rarita-Schwinger operator on compact Riemannian spin manifolds and obtained results for several important classes of manifolds, including quaternionic K\"{a}hler manifolds, symmetric spaces, and manifolds with special holonomy. C. B\"{a}r and R. Mazzeo \cite{BarMazzeo} constructed compact Riemannian manifolds with many Rarita-Schwinger fields. Rarita-Schwinger fields have also been studied on compact strict nearly K\"{a}hler 6-manifolds and on compact nearly parallel $\G_2$-manifolds (\cite{Ohno}, \cite{OhnoTomihisa}). The Rarita-Schwinger operator also appears in gauge theory; for instance, Nguyen \cite{Nguyen} introduced a Seiberg-Witten type equation in which the Dirac operator is replaced by the Rarita-Schwinger operator. These works indicate that Rarita-Schwinger fields are closely connected with Killing spinors, parallel spinors, special holonomy geometry, and gauge theory.

The purpose of the present paper is to study Rarita-Schwinger fields from a different and more explicit point of view, namely, the left-invariant geometry of Lie groups. Lie groups endowed with left-invariant metrics provide a rich class of homogeneous Riemannian spin manifolds. One advantage of this setting is that differential equations for invariant fields can often be reduced to algebraic equations on the corresponding metric Lie algebra. This idea has been used effectively in the study of Dirac operators and harmonic spinors on Lie groups; see, for example, \cite{AmmannBar}, \cite{BazzoniLuciaVicente}, and \cite{GilGarciaRusso}. 

Our main result is a complete classification of left-invariant Rarita-Schwinger fields on 2- and 3-dimensional Lie groups endowed with left-invariant Riemannian metrics. Since connected and simply connected Lie groups are determined, up to isomorphism, by their Lie algebras, the 3-dimensional case can be studied at the Lie algebra level. The classification of 3-dimensional real Lie algebras goes back to Bianchi \cite{Bianchi1,Bianchi2}; see also Milnor's work on left-invariant metrics on Lie groups \cite{Milnor}. We use this low-dimensional Lie-theoretic framework to reduce the Rarita-Schwinger equation to explicit algebraic systems. We now state the main results of this paper.

\begin{maintheorem} \label{thmA}
Let $G$ be a connected and simply connected 2-dimensional Lie group endowed with a left-invariant Riemannian metric. If $G$ is abelian, then every left-invariant spin-$3/2$ spinor is parallel, and hence gives a Rarita-Schwinger field. If $G$ is non-abelian, then there are no non-trivial left-invariant Rarita-Schwinger fields on $G$.
\end{maintheorem}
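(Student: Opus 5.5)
The plan is to reduce the left-invariant Rarita--Schwinger equation to a finite-dimensional linear system on the metric Lie algebra, and then to check the two isomorphism types of $2$-dimensional Lie algebras separately. Let $\mathfrak g$ be the Lie algebra of $G$ and fix an orthonormal basis $e_1,e_2$ of $\mathfrak g$. This gives a global left-invariant orthonormal frame and trivialises the spinor bundle $S\cong G\times\Sigma_2$, where $\Sigma_2\cong\mathbb C^2$. A left-invariant spin-$3/2$ field is then a section
\[
\phi=e_1\otimes\psi_1+e_2\otimes\psi_2,\qquad \psi_1,\psi_2\in\Sigma_2 \text{ constant},
\]
subject to the twistor-kernel condition $e_1\cdot\psi_1+e_2\cdot\psi_2=0$. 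Clifford multiplying by $e_2$ gives $\psi_2=e_2e_1\cdot\psi_1$. Hence the space of left-invariant spin-$3/2$ spinors is parametrised by $\psi_1\in\mathbb C^2$.

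\textbf{Abelian case.} The Koszul formula gives $\nabla_{e_i}e_j=0$. The spin connection
\[
\nabla_X\psi=X(\psi)+\tfrac14\sum_{i,j}g(\nabla_Xe_i,e_j)\,e_ie_j\cdot\psi
\]
therefore annihilates constant spinors, and the induced connection on $TG\otimes S$ annihilates constant sections. Every left-invariant spin-$3/2$ spinor is thus parallel. The twisted Dirac operator $D=\sum_i e_i\cdot\nabla_{e_i}$ kills it, and so does its projection, the Rarita--Schwinger operator $Q$.

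\textbf{Non-abelian case.} Up to isomorphism $\mathfrak g$ is the $ax+b$ algebra. Take $e_2$ to be a unit vector spanning $[\mathfrak g,\mathfrak g]$ and $e_1$ orthogonal to it; then $[e_1,e_2]=\lambda e_2$ with $\lambda\neq0$. The Koszul formula gives
\[
\nabla_{e_1}e_i=0,\qquad \nabla_{e_2}e_1=-\lambda e_2,\qquad \nabla_{e_2}e_2=\lambda e_1,
\]
so on spinors $\nabla_{e_1}=0$ and $\nabla_{e_2}=\pm\tfrac{\lambda}{2}e_1e_2\cdot$; I will fix the sign carefully. Next I compute the twisted covariant derivative
\[
\nabla_{e_2}\phi=\nabla_{e_2}e_1\otimes\psi_1+e_1\otimes\nabla_{e_2}\psi_1+\nabla_{e_2}e_2\otimes\psi_2+e_2\otimes\nabla_{e_2}\psi_2 .
\]
From this I get $D\phi=e_2\cdot\nabla_{e_2}\phi$, written as $e_1\otimes\alpha_1+e_2\otimes\alpha_2$ with $\alpha_i$ explicit linear expressions in $\psi_1$.

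To impose $Q\phi=0$, I use the standard description: $Q\phi=0$ if and only if $D\phi$ lies in the image of the embedding $\iota(\psi)=-\tfrac12\sum_j e_j\otimes e_j\cdot\psi$. Equivalently, $Q=D-\iota\circ\pi\circ D$, with $\pi$ the Clifford contraction. The condition is then
\[
\alpha_1=e_1\cdot\beta,\qquad \alpha_2=e_2\cdot\beta \quad\text{for some }\beta,
\]
which amounts to $\alpha_2=e_2e_1^{-1}\cdot\alpha_1=-e_2e_1\cdot\alpha_1$.

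This yields a linear equation $\lambda A\psi_1=0$ for an explicit endomorphism $A$ of $\Sigma_2$ built from $e_1e_2$. To finish, I decompose $\Sigma_2=\Sigma^+\oplus\Sigma^-$ into the $\pm i$ eigenspaces of $e_1e_2$. On each summand $A$ acts by a nonzero scalar, so $\lambda\neq0$ forces $\psi_1=0$, hence $\phi=0$.

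\textbf{Main obstacle.} The delicate step is the sign and normalisation bookkeeping in this last computation. The chirality split should make it transparent: on $\Sigma^\pm$ every operator involved is a scalar, so the RS condition reduces to one scalar equation per chirality. I must check that neither scalar vanishes identically, e.g.\ that the contributions of $\nabla_{e_2}e_j$ and of $\nabla_{e_2}\psi_j$ do not cancel. As a sanity check, the result should agree with the known fact that the hyperbolic plane, which is what this metric Lie group is, carries no nonzero $G$-invariant RS fields.
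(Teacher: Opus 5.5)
Your abelian case is fine. The non-abelian argument, however, fails at its decisive step. You impose only $Q\phi=0$, and in dimension $2$ this condition is vacuous: your endomorphism $A$ is $0$, not a nonzero scalar on $\Sigma^\pm$.

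To see why, take $\chi=e_1\otimes\chi_1+e_2\otimes\chi_2$ with $\chi_2=e_2e_1\cdot\chi_1$. Then $e_k\cdot\chi$ satisfies your criterion (second component $=-e_2e_1\cdot$ first component) exactly when $e_k(e_2e_1)=-(e_2e_1)e_k$. This holds for $k=1,2$, because in dimension $2$ the volume element anticommutes with every vector. Equivalently, there is no nonzero $\Spin(2)$-equivariant map $T^{\mathbb C}\otimes\Sigma_{3/2}\to\Sigma_{3/2}$, since the weights $\pm\frac52,\pm\frac12$ and $\pm\frac32$ are disjoint.

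Since $\nabla_{e_k}$ preserves $\Sigma_{3/2}$, $D\phi$ always lies in $\iota(\Sigma_{1/2})$. So $Q\equiv 0$ when $n=2$, whatever sign convention you fix. With your own Koszul data and $\nabla_{e_2}=-\tfrac{\lambda}{2}e_1e_2\cdot$ on spinors, one gets explicitly
\[
\nabla_{e_2}\phi=\tfrac{3\lambda}{2}\bigl(e_1\otimes e_2e_1\cdot\psi_1-e_2\otimes\psi_1\bigr),\qquad D\phi=-\tfrac{3\lambda}{2}\bigl(e_1\otimes e_1\cdot\psi_1+e_2\otimes e_2\cdot\psi_1\bigr)=3\lambda\,\iota(\psi_1).
\]
Hence $\alpha_2=-e_2e_1\cdot\alpha_1$ holds for every $\psi_1$. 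Under a ``$\ker Q$'' definition the non-abelian claim would actually be false, since every left-invariant spin-$3/2$ field would qualify. Your hyperbolic-plane sanity check would fail for the same reason.

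The missing ingredient is the other half of the Rarita--Schwinger system as the paper defines it: $\Psi\in\Gamma(\Sigma_{3/2})$ with $P^*\Psi=0$ and $Q\Psi=0$, i.e.\ $D_{TM}\Psi=0$. In dimension $2$ all of the content sits in the $\Sigma_{1/2}$-component $\pi(D\phi)$. Your own computation gives $\pi(D\phi)=e_1\cdot\alpha_1+e_2\cdot\alpha_2=3\lambda\psi_1$, which vanishes only for $\psi_1=0$, hence $\phi=0$.

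This is exactly the paper's argument. It applies its Maurer--Cartan formula for the twisted Dirac operator with $[e_1,e_2]=ae_2$ and the spin-$3/2$ condition $\psi_2=e_2e_1\cdot\psi_1$. It obtains $-4D_{TM}\Psi=6a(e_1\cdot\psi_1\otimes e_1+e_2\cdot\psi_1\otimes e_2)$ and reads off $\psi_1=0$. So replace your test ``$D\phi\in\operatorname{im}\iota$'' by ``$D\phi=0$'' (equivalently, add $\pi(D\phi)=0$), and the rest of your setup goes through.
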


\begin{maintheorem} \label{thmB}
Let $G$ be a connected and simply connected 3-dimensional Lie group endowed with a left-invariant Riemannian metric. If $G$ is abelian, then every left-invariant spin-$3/2$ spinor is parallel, and hence gives a Rarita-Schwinger field. If $G$ is non-abelian, then there are non-trivial left-invariant Rarita-Schwinger fields only on $G\cong\SU(2)$ equipped with a special left-invariant metric. For this metric, the space of left-invariant Rarita-Schwinger fields has complex dimension 2.
\end{maintheorem}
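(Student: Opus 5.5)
The strategy is to reduce Theorem~\ref{thmB} to linear algebra on the metric Lie algebra $(\mathfrak g,\langle\cdot,\cdot\rangle)$ and then run through Milnor's normal forms. First, fix a left-invariant orthonormal frame $e_1,e_2,e_3$ and the spinor module $\Sigma\cong\mathbb C^2$. For $\mathfrak g$ unimodular, I take the Milnor frame with $[e_2,e_3]=\lambda_1e_1$, $[e_3,e_1]=\lambda_2e_2$, $[e_1,e_2]=\lambda_3e_3$. For $\mathfrak g$ non-unimodular, I use the standard form $[e_1,e_2]=\alpha e_2+\beta e_3$, $[e_1,e_3]=\gamma e_2+\delta e_3$ with $\alpha+\delta\neq0$.

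A left-invariant spin-$3/2$ field is $\phi=\sum_i e_i\otimes\psi_i$ with constant $\psi_i\in\Sigma$, subject to the algebraic constraint $\sum_i e_i\cdot\psi_i=0$. The Levi-Civita connection acts on invariant spinors by the Koszul formula, $\nabla_{e_i}\psi=\tfrac14\sum_{j<k}\Gamma_{ij}^k\,e_j e_k\cdot\psi$, and on $\mathfrak g$ by $\nabla_{e_i}e_j=\sum_k\Gamma_{ij}^k e_k$. The Rarita--Schwinger operator is $Q=(1-\tfrac{2}{n}\cdot\text{proj})\circ D_{\text{twisted}}$, where the projection removes the image of $\iota(\psi)=-\tfrac1n\sum e_j\otimes e_j\psi$. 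So $Q\phi=0$ becomes a linear system in the six complex unknowns $\psi_1,\psi_2,\psi_3$ with coefficients in the structure constants. Concretely, I will compute $\tilde D\phi=\sum_{i,j}e_i\cdot\nabla_{e_i}(e_j\otimes\psi_j)$, then impose $\tilde D\phi\in\iota(\Sigma)$. Equivalently, for each $j$ the $j$-th component of $\tilde D\phi$ equals $e_j\cdot\chi$, where $\chi$ is determined by Clifford contraction. This gives the system of Rarita--Schwinger equations.

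The abelian case is immediate: all $\Gamma=0$, so every invariant $\phi$ is parallel.

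The unimodular case is the main computation. Here $\Gamma_{ij}^k$ is totally expressible via $\mu_i=\tfrac12(\lambda_1+\lambda_2+\lambda_3)-\lambda_i$, and $\nabla_{e_i}\psi=\tfrac12\mu_i e_i\cdot\omega\cdot\psi$ up to a sign convention; in dimension $3$ the volume element acts as a scalar, so $\nabla_{e_i}\psi=c\,\mu_i e_i\cdot\psi$. The system then splits by the eigenvalues of Clifford multiplication, and I expect it to reduce to a linear map on the $4$-dimensional constrained space $\ker(\text{Clifford contraction})$ whose kernel is nonzero only under a polynomial relation among the $\mu_i$. I would decompose the $4$-dimensional space into the part $\{e_i\otimes e_i\psi\text{-type}\}$ and its complement, diagonalize using the quaternionic structure, and derive conditions such as $\mu_1=\mu_2$ with $\mu_3$ tied to them (say $\mu_3=-2\mu_1$, to be determined). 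Then I will check which signatures of $(\lambda_i)$ satisfy the condition. Only all $\lambda_i>0$ ($\mathfrak{su}(2)$) should survive, giving a Berger metric with two equal eigenvalues and a $2$-dimensional kernel. For $\mathfrak{sl}(2,\mathbb R)$, $\mathfrak e(2)$, $\mathfrak e(1,1)$ and Heisenberg, the relation should force a contradiction with the sign pattern, or force $\mu$'s that make the group abelian.

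The non-unimodular case is handled the same way, using the Milnor trace vector $e_1$. The term $\operatorname{tr}\operatorname{ad}_{e_1}\neq0$ contributes a non-skew part that should force all $\psi_i=0$; I expect this to appear via taking traces of the system, giving $(\alpha+\delta)\psi=0$.

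I expect the main obstacle to be the explicit solution of the $6\times6$ (constrained to $4$-dimensional) system in the unimodular case, and pinning down exactly which $\lambda$-relation yields a nonzero kernel and that it has dimension $2$. I would handle it by exploiting the $\mathbb Z_2^2$ symmetry of the Milnor frame to block-diagonalize.
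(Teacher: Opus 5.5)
There is a genuine gap at the first step: you set up the wrong equation. You impose only $\tilde D\phi\in\iota(\Sigma)$, which is $Q\phi=0$. A Rarita--Schwinger field must also satisfy $P^{\ast}\phi=0$, i.e.\ $D_{TM}\phi=0$ including its $\Sigma_{1/2}$-component $\sum_j e_j\cdot(D_{TM}\phi)_j$.

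This is not a technicality. On the $4$-dimensional space of invariant spin-$3/2$ spinors, your condition is a square $4\times 4$ system, so it degenerates along a whole hypersurface of metrics. The rigidity in the theorem comes from the overdetermined system of six equations in four unknowns that the paper solves. For $\mathfrak{su}(2)$ this gives two independent conditions, $s=4u$ from the third component and $sr+16uv=0$ from the first two, which is why only isolated rays survive.

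Explicitly, in the paper's Milnor-frame notation one finds $\sum_j e_j\cdot(D_{TM}\Psi)_j=(\alpha-\beta)e_1\cdot\psi_1+(\alpha-\gamma)e_2\cdot\psi_2$, and your condition $Q\Psi=0$ reduces to
\[
(3r+4v)\psi_1+8u\,e_3\cdot\psi_2=0,\qquad 8v\,e_3\cdot\psi_1+(3s-4u)\psi_2=0 .
\]
This has non-zero solutions if and only if $13(\alpha+\beta+\gamma)^2=64(\alpha\beta+\beta\gamma+\gamma\alpha)$. That cone contains $(1,1,6)$, but it also contains:
\begin{itemize}
\item $(\alpha,\beta,\gamma)=(253,13,78)$ on $\mathfrak{su}(2)$, with solutions $\psi_1=-\tfrac{7}{6}e_3\cdot\psi_2$; the three constants are distinct, so this is not a Berger metric;
\item $(13,13,-2)$ on $\mathfrak{sl}(2,\mathbb R)$, with solutions $\psi_1=-\tfrac{1}{2}e_3\cdot\psi_2$.
\end{itemize}
In both cases the $\Sigma_{1/2}$-component above is a non-zero multiple of $e_2\cdot\psi_2$, so these are not Rarita--Schwinger fields. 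Carried out correctly, your plan would therefore "prove" a statement contradicting the theorem. Your anticipated outcome of two relations among the $\mu_i$ also cannot come from a single determinant condition. You must impose all three components $(D_{TM}\Psi)_j=0$, not only their projection to $\Sigma_{3/2}$.

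Beyond this, the remaining steps are placeholders rather than arguments. In the unimodular case, the decisive relation and the dimension count are left ``to be determined''. The paper obtains the ratio $1:1:6$ (the metric $\operatorname{diag}(6,1,1)$) and complex dimension $2$ by solving the reduced system case by case over the sign patterns of $(\alpha,\beta,\gamma)$. In the non-unimodular case, your claim that taking traces gives $(\alpha+\delta)\psi=0$ has no supporting identity. The paper instead eliminates $\psi_1$ using the invertible matrix $A_1$. It then shows that the Schur complement $B_2-A_2A_1^{-1}B_1$ has determinant $16(b^2+36)(9b^2+4)/\det A_1>0$, working with the components of $D_{TM}\Psi$ themselves rather than their $\Sigma_{3/2}$-projection.
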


Theorem \ref{thmA} shows that, in dimension 2, the only left-invariant examples come from the flat abelian case. Theorem \ref{thmB} shows that the 3-dimensional situation is more subtle but still very rigid. In the unimodular case, we examine the Heisenberg Lie algebra $\mathfrak{heis}_3$, the Euclidean motion Lie algebra $\mathfrak{e}(2)$, the Lorentzian motion Lie algebra $\mathfrak{e}(1,1)$, $\mathfrak{sl}(2,\mathbb{R})$, and $\mathfrak{su}(2)$. Among the corresponding simply connected Lie groups, non-trivial left-invariant Rarita-Schwinger fields outside the flat abelian case occur only for the exceptional metrics on $\SU(2)$. In the non-unimodular case, using standard normal forms for metric Lie algebras, we show that the Rarita-Schwinger equation does not admit non-trivial left-invariant solutions. 

The special left-invariant metric on $\SU(2)$ appearing in Theorem \ref{thmB} will be described explicitly in Subsection \ref{subsec: Berger sphere}. It is a Berger metric on $S^3$. Berger spheres have played an important role in spin geometry, notably in the study of Dirac spectra and harmonic spinors \cite{Bar}, \cite{Hitchin}. Our result shows that a special Berger sphere admits non-trivial left-invariant Rarita-Schwinger fields, although the corresponding metric is not Einstein. This gives an explicit compact homogeneous example of a non-Einstein metric carrying left-invariant Rarita-Schwinger fields. Moreover, the round $3$-sphere admits no Rarita-Schwinger fields, whereas the Berger metric obtained here admits non-trivial left-invariant Rarita-Schwinger fields. Therefore, this gives a second example in which the existence of Rarita-Schwinger fields on the same manifold depends on the choice of metric. The first example in which the existence depends on the metric was obtained on $S^3\times S^3$ in \cite{OhnoTomihisa}.

This paper is organized as follows. In Section \ref{Preliminaries}, we recall the definition of Rarita-Schwinger fields, introduce left-invariant spinors and vector spinors on Lie groups, and derive a formula for the twisted Dirac operator acting on left-invariant vector spinors. In Section \ref{Left-invariant Rarita-Schwinger fields on low dimensional Lie groups}, we classify left-invariant Rarita-Schwinger fields on low-dimensional Lie groups. We first treat the 2-dimensional case. We then compute the component expression of the twisted Dirac operator for an arbitrary $3$-dimensional metric Lie algebra and derive the left-invariant Rarita-Schwinger equation under the spin-$3/2$ condition. Using this general formula, we classify the $3$-dimensional unimodular and non-unimodular cases separately. Finally, we describe the exceptional metric on $\mathfrak{su}(2)$ as a Berger metric on $\SU(2)\cong S^3$.

\section{Preliminaries} \label{Preliminaries}
\subsection{Rarita-Schwinger fields}
We first recall the basic definitions of vector spinors, spin-$3/2$ spinors, and Rarita-Schwinger fields, following the standard formulation used in \cite{HommaSemmelmann}.

Let $(M^n,g)$ be an $n$-dimensional Riemannian spin manifold, and let $\Sigma_{1/2}M$ denote its complex spinor bundle. We also denote by $TM^{\mathbb C}$ the complexified tangent bundle. The tensor product $\Sigma_{1/2}M\otimes TM^{\mathbb C}$ is called the vector spinor bundle. There is a natural Clifford contraction $\Pi:\Sigma_{1/2}M\otimes TM^{\mathbb C}\to \Sigma_{1/2}M$ defined locally by
\[
\Pi\left(\sum_{i=1}^n \psi_i\otimes e_i\right)=\sum_{i=1}^n e_i\cdot\psi_i,
\]
where $(e_1,\ldots,e_n)$ is a local orthonormal frame. The spin-$3/2$ spinor bundle is defined as the kernel of this contraction: $\Sigma_{3/2}M\coloneqq \ker\Pi \subset \Sigma_{1/2}M\otimes TM^{\mathbb{C}}$. Thus the vector spinor bundle decomposes, as a $\Spin(n)$-module, into
\[
\Sigma_{1/2}M \otimes TM^{\mathbb C} \cong \Sigma_{1/2}M \oplus \Sigma_{3/2}M.
\]
Let $\nabla$ be the connection on $\Sigma_{1/2}M\otimes TM^{\mathbb C}$ induced by the spin connection on $\Sigma_{1/2}M$ and the Levi-Civita connection on $TM^{\mathbb C}$. The twisted Dirac operator on vector spinors is defined by
\[
D_{TM}=\sum_{k=1}^{n}(e_k\cdot\otimes \Id_{TM^{\mathbb C}})\circ \nabla_{e_k}.
\]
With respect to the above decomposition $\Sigma_{1/2}M\otimes TM^{\mathbb C}\cong\Sigma_{1/2}M\oplus \Sigma_{3/2}M$, the twisted Dirac operator $D_{TM}$ can be written as a $2\times 2$ matrix of first-order differential operators:
\[
D_{TM}=\left(
\begin{array}{cc}
\frac{2-n}{n}D & 2P^{\ast} \\[1ex]
\frac{2}{n}P & Q
\end{array}
\right).
\]
Here $D:\Gamma(\Sigma_{1/2}M)\to \Gamma(\Sigma_{1/2}M)$ is the ordinary Dirac operator, $P:\Gamma(\Sigma_{1/2}M)\to \Gamma(\Sigma_{3/2}M)$ is the Penrose operator, and $P^\ast$ denotes its formal adjoint. The lower right component $Q:\Gamma(\Sigma_{3/2}M)\to \Gamma(\Sigma_{3/2}M)$ is called the Rarita-Schwinger operator. It is a formally self-adjoint first-order elliptic differential operator.

A Rarita-Schwinger field is a section $\Psi\in\Gamma(\Sigma_{1/2}M\otimes TM^{\mathbb C})$ satisfying
\[
\Psi\in\Gamma(\Sigma_{3/2}M), \qquad P^{\ast}\Psi=0,\qquad Q\Psi=0.
\]
Equivalently, a Rarita-Schwinger field is a spin-$3/2$ spinor $\Psi\in\Gamma(\Sigma_{3/2}M)$ satisfying $D_{TM}\Psi=0$. 

\subsection{The twisted Dirac operator on left-invariant vector spinors}
In this subsection, we adapt the above definitions to Lie groups endowed with left-invariant Riemannian metrics and describe the twisted Dirac operator on left-invariant vector spinors. A left-invariant orthonormal frame trivializes the tangent bundle, and hence the spinor and vector spinor bundles. Thus the Rarita-Schwinger equation for left-invariant spin-$3/2$ spinors can be written as an algebraic equation on the corresponding metric Lie algebra. This point of view is standard in the study of left-invariant harmonic spinors on Lie groups; see, for example, \cite{GilGarciaRusso}.

Let $G$ be a connected and simply connected $n$-dimensional Lie group endowed with a left-invariant Riemannian metric $g$. We denote by $\mathfrak{g}$ its Lie algebra. Choose an oriented orthonormal basis $(e_1,\ldots,e_n)$ of $(\mathfrak{g},g)$, and denote by the same symbols the corresponding left-invariant orthonormal frame on $G$. We identify $TG$ and $T^*G$ via the metric and use the same symbols $(e_1,\ldots,e_n)$ for the orthonormal frame and its dual coframe. In particular, $de_i$ denotes the exterior derivative of the corresponding left-invariant $1$-form. For a left-invariant $1$-form $X$ and left-invariant vector fields $Y,Z$, the Maurer-Cartan formula reads
\begin{equation}\label{eq: maurer-cartan}
dX(Y,Z)=-X([Y,Z])=-\langle X,[Y,Z]\rangle.
\end{equation}
By left translations, the tangent bundle is trivialized as $TG\cong G\times \mathfrak{g}$. Under this trivialization, the oriented orthonormal frame bundle is identified with $\mathbf{SO}(G)\cong G\times \SO(n)$. Since $G$ is simply connected, it has a unique spin structure. With respect to the above left-invariant frame, the principal $\Spin(n)$-bundle may be identified with $\mathbf{Spin}(G)\cong G\times \Spin(n)$. Consequently, the spinor bundle is trivialized as $\Sigma_{1/2}G\cong G\times \Delta_n$, where $\Delta_n$ denotes the complex spin representation of $\Spin(n)$. A spinor field on \(G\) is called left-invariant if it is constant with respect to this trivialization. Thus the space of left-invariant spinors is naturally identified with $\Delta_n$.

Similarly, the vector spinor bundle is trivialized as $\Sigma_{1/2}G\otimes TG^{\mathbb C} \cong G\times(\Delta_n\otimes\mathfrak g^{\mathbb C})$. We call a vector spinor field left-invariant if it is constant with respect to this trivialization. Hence a left-invariant vector spinor is identified with an element of $\Delta_n\otimes\mathfrak g^{\mathbb C}$, and can be written uniquely as
\[
\Psi=\sum_{i=1}^{n}\psi_i\otimes e_i,
\qquad
\psi_i\in\Delta_n.
\]

A left-invariant spin-$3/2$ spinor is therefore a left-invariant vector spinor $\displaystyle \Psi=\sum_{i=1}^n\psi_i\otimes e_i$ satisfying $\displaystyle \sum_{i=1}^n e_i\cdot\psi_i=0$.

We now describe the twisted Dirac operator on left-invariant vector spinors.
\begin{proposition} \label{prop: twisted-dirac-left-invariant}
Let $G$ be a connected and simply connected $n$-dimensional Lie group endowed with a left-invariant Riemannian metric, let $(e_1,\ldots,e_n)$ be a left-invariant orthonormal frame, and let $\displaystyle \Psi=\sum_{k=1}^n\psi_k\otimes e_k$ be a left-invariant vector spinor. Then
\begin{align*}
-4D_{TM}\Psi=&\sum_{i,k=1}^n\left(e_i\wedge de_i+2e_i\mathbin{\lrcorner}de_i\right)\cdot\psi_k\otimes e_k -2\sum_{j,k=1}^n(e_k\mathbin{\lrcorner}de_j)\cdot\psi_k\otimes e_j \\
&+2\sum_{j,k=1}^n(e_j\mathbin{\lrcorner}de_k)\cdot\psi_k\otimes e_j -2\sum_{i,j,k=1}^nde_i(e_k,e_j)e_i\cdot\psi_k\otimes e_j .
\end{align*}
\end{proposition}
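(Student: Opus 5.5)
The plan is to compute $D_{TM}\Psi=\sum_i e_i\cdot\nabla_{e_i}\Psi$ directly. Because $\Psi$ is left-invariant, the only contributions come from the connection coefficients: the spinor part through the spin connection, and the vector part through the Levi-Civita connection on $\mathfrak g^{\mathbb C}$. I will write
\[
\nabla_{e_i}(\psi_k\otimes e_k)=(\nabla_{e_i}\psi_k)\otimes e_k+\psi_k\otimes\nabla_{e_i}e_k .
\]
The constant $\psi_k$ satisfies $\nabla_{e_i}\psi_k=\frac14\sum_{a,b}\langle\nabla_{e_i}e_a,e_b\rangle e_a\cdot e_b\cdot\psi_k$, with the sign convention fixed so that it matches the paper's Clifford relation. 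I will then express every Christoffel symbol through the exterior derivatives $de_j$ using the Maurer--Cartan formula \eqref{eq: maurer-cartan} together with the Koszul formula. For a left-invariant frame, Koszul gives
\[
2\langle\nabla_{X}Y,Z\rangle=\langle[X,Y],Z\rangle-\langle[Y,Z],X\rangle+\langle[Z,X],Y\rangle ,
\]
and \eqref{eq: maurer-cartan} turns each bracket into $-de_\cdot(\cdot,\cdot)$.

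\textbf{The spinor part.} The resulting terms are
\[
\sum_{i,k}e_i\cdot(\nabla_{e_i}\psi_k)\otimes e_k=\Big(\sum_i e_i\cdot\nabla_{e_i}\Big)\psi_k\otimes e_k ,
\]
which is the ordinary Dirac operator on a constant spinor. I will use the standard formula, as in \cite{GilGarciaRusso}, for the Dirac operator of a left-invariant spinor in terms of $de_i$: it is $-\frac14\sum_i(e_i\wedge de_i+2e_i\lrcorner de_i)\cdot\psi_k$. Rather than cite it, I plan to rederive it. The Koszul expression splits into a totally antisymmetric part, which acts by Clifford multiplication by a 3-form built from $\sum_i e_i\wedge de_i$, and a trace part. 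The trace part is proportional to $\sum_{i,j}\langle[e_i,e_j],e_j\rangle e_i$, which is the vector $\sum_i e_i\lrcorner de_i$ up to sign. This produces the first sum in the statement.

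\textbf{The vector part.} The terms $\sum_{i,k}e_i\cdot\psi_k\otimes\nabla_{e_i}e_k$ require expanding $\nabla_{e_i}e_k=\sum_j\Gamma_{ik}^j e_j$. Koszul gives
\[
2\Gamma_{ik}^j=-de_j(e_i,e_k)+de_i(e_k,e_j)-de_k(e_j,e_i).
\]
Summing against $e_i\cdot$ turns the pieces $\sum_i de_j(e_i,e_k)e_i$ and $\sum_i de_k(e_j,e_i)e_i$ into the contractions $e_k\lrcorner de_j$ and $e_j\lrcorner de_k$, up to signs. The remaining piece $\sum_i de_i(e_k,e_j)e_i$ stays as it is. Multiplying through by $-4$ should reproduce the last three sums with coefficients $-2$, $+2$, $-2$.

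\textbf{Main obstacle.} The difficulty is bookkeeping of signs and conventions. These include the sign of the spin connection ($\frac14$ versus $-\frac14$ with $e_i\cdot e_i=-1$), the convention for $\lrcorner$ and for $de_i$ as a 2-form, and the identification of the 3-form term. I will check the total against a simple case, such as $\mathfrak{su}(2)$ with the round metric or the 2-dimensional non-abelian algebra, to fix these signs.
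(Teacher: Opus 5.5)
Your proposal is correct and follows the paper's proof. You split $\nabla_{e_i}(\psi_k\otimes e_k)$ into the Dirac operator on a constant spinor, which the paper simply cites from \cite{GilGarciaRusso}, and the term $e_i\cdot\psi_k\otimes\nabla_{e_i}e_k$, expanded via Koszul; your form $2\Gamma_{ik}^j=-de_j(e_i,e_k)+de_i(e_k,e_j)-de_k(e_j,e_i)$ is exactly the paper's expression. Your optional rederivation of the Dirac term also goes through with the standard conventions $\nabla_X\psi=\frac14\sum_{a,b}\langle\nabla_Xe_a,e_b\rangle e_a\cdot e_b\cdot\psi$, $e_a\cdot e_a=-1$, and $e_i\mathbin{\lrcorner}\omega=\omega(e_i,\cdot)$: it yields exactly $\sum_i(e_i\wedge de_i+2e_i\mathbin{\lrcorner}de_i)$, so no test case is needed to fix signs.
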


\begin{proof}
By the definition of the twisted Dirac operator,
\begin{align*}
-4D_{TM}\Psi=&-4\sum_{i,k=1}^n(e_i\cdot\otimes \Id)\nabla_{e_i}(\psi_k\otimes e_k) \\
=&-4\sum_{k=1}^n D\psi_k\otimes e_k-4\sum_{i,k=1}^ne_i\cdot\psi_k\otimes\nabla_{e_i}e_k.
\end{align*}
For left-invariant vector fields $X,Y$, the Levi--Civita connection is given by
\[
\nabla_XY=\frac{1}{2}\sum_{j=1}^n\left(-de_j(X,Y)+dY(X,e_j)+dX(Y,e_j)\right)e_j.
\]
%Applying this formula to $X=e_i$ and $Y=e_k$, we obtain
%\[
%\nabla_{e_i}e_k
%\frac{1}{2}\sum_{j=1}^n\left(-de_j(e_i,e_k)+de_k(e_i,e_j)+de_i(e_k,e_j)\right)e_j.
%\]
Therefore,
\begin{align*}
&-4\sum_{i,k=1}^n e_i\cdot\psi_k\otimes \nabla_{e_i}e_k \\
=&-2\sum_{i,j,k=1}^n\left(-de_j(e_i,e_k)+de_k(e_i,e_j)+de_i(e_k,e_j)\right)e_i\cdot\psi_k\otimes e_j  \\
=&-2\sum_{j,k=1}^n(e_k\mathbin{\lrcorner}de_j)\cdot\psi_k\otimes e_j+2\sum_{j,k=1}^n(e_j\mathbin{\lrcorner}de_k)\cdot\psi_k\otimes e_j-2\sum_{i,j,k=1}^n de_i(e_k,e_j)e_i\cdot\psi_k\otimes e_j .
\end{align*}
On the other hand, the formula for the ordinary Dirac operator on left-invariant spinors \cite[Proposition~1.1]{GilGarciaRusso} gives
\[
-4D\psi_k=\sum_{i=1}^n\left(e_i\wedge de_i+2e_i\mathbin{\lrcorner}de_i\right)\cdot\psi_k .
\]
Substituting these formulas into the expression for $-4D_{TM}\Psi$ yields the desired formula.
\end{proof}

\begin{remark}\label{rem: metric Lie algebra}
By Proposition \ref{prop: twisted-dirac-left-invariant}, the existence of left-invariant Rarita-Schwinger fields is determined by the structure equations of the corresponding metric Lie algebra $(\mathfrak g,\langle\cdot,\cdot\rangle)$. Strictly speaking, a left-invariant Rarita-Schwinger field is a section of the spin-$3/2$ spinor bundle over the Lie group $G$. For brevity, however, we will say that the corresponding metric Lie algebra $(\mathfrak g,\langle\cdot,\cdot\rangle)$ admits a left-invariant Rarita-Schwinger field.
\end{remark}

\begin{remark} \label{rem: abel lie algebra}
If $\mathfrak g$ is abelian, then $de_i=0$. It follows from Proposition \ref{prop: twisted-dirac-left-invariant} that every left-invariant vector spinor lies in the kernel of $D_{TM}$. In particular, every left-invariant spin-$3/2$ spinor is a Rarita-Schwinger field. Hence, in what follows, we consider only non-abelian Lie algebras.
\end{remark}

A Lie algebra $\mathfrak g$ is called unimodular if
\[
\tr(\ad_X)=0
\qquad
\text{for all }X\in\mathfrak{g}.
\]
A Lie algebra that is not unimodular is called non-unimodular. Moreover, a connected Lie group $G$ is unimodular if and only if its Lie algebra $\mathfrak g$ is unimodular.

\section{Left-invariant Rarita-Schwinger fields on low-dimensional Lie groups} \label{Left-invariant Rarita-Schwinger fields on low dimensional Lie groups}
\subsection{Left-invariant Rarita-Schwinger fields on 2-dimensional Lie groups}
Up to isomorphism, the only non-abelian $2$-dimensional real Lie algebra is the $2$-dimensional affine Lie algebra $\mathfrak{aff}(\mathbb R)$. For any inner product on $\mathfrak{aff}(\mathbb R)$, one can choose an orthonormal basis $(e_1,e_2)$ such that
\[
[e_1,e_2]=ae_2,
\]
where $a>0$. The Maurer-Cartan formula $(\ref{eq: maurer-cartan})$ gives
\[
de_1=0, \qquad de_2=-ae_1\wedge e_2.
\]
Therefore, applying Proposition \ref{prop: twisted-dirac-left-invariant} with respect to this orthonormal basis, we obtain the following expression for the action of the twisted Dirac operator on a left-invariant spin-$3/2$ spinor $\displaystyle \Psi=\sum_{i=1}^2\psi_i\otimes e_i$ on $\mathfrak{aff}(\mathbb R)$:
\begin{align*}
-4D_{TM}\Psi=&\sum_{i,k=1}^2(e_i\wedge de_i+2e_i\mathbin{\lrcorner}de_i)\cdot \psi_k\otimes e_k-2\sum_{j,k=1}^2(e_k\mathbin{\lrcorner}de_j)\cdot\psi_k\otimes e_j \\
&+2\sum_{j,k=1}^2(e_j\mathbin{\lrcorner}de_k)\cdot\psi_k\otimes e_j-2\sum_{i,j,k=1}^2de_i(e_k,e_j)e_i\cdot\psi_k\otimes e_j \\
=&(2ae_1\cdot\psi_1\otimes e_1+2ae_1\cdot\psi_2\otimes e_2)+2ae_2\cdot\psi_1\otimes e_2 \\
&-2ae_2\cdot\psi_2\otimes e_1+(2ae_2\cdot\psi_1\otimes e_2-2ae_2\cdot\psi_2\otimes e_1) \\
=&2a(e_1\cdot\psi_1-2e_2\cdot\psi_2)\otimes e_1+2a(e_1\cdot\psi_2+2e_2\cdot\psi_1)\otimes e_2.
\end{align*}
Using the spin-$3/2$ condition $\psi_2=e_2\cdot e_1\cdot\psi_1$, we further obtain
\[
-4D_{TM}\Psi=6ae_1\cdot\psi_1\otimes e_1+6ae_2\cdot\psi_1\otimes e_2.
\]
Hence $D_{TM}\Psi=0$ if and only if $\psi_1=0$. Thus, $\mathfrak{aff}(\mathbb R)$ admits no non-trivial left-invariant Rarita-Schwinger fields.

Combining this with the abelian case discussed in Remark \ref{rem: abel lie algebra}, we obtain the first main theorem.
\let\temp\thetheorem
\renewcommand{\thetheorem}{\ref{thmA}}

\begin{theorem}
The only connected and simply connected $2$-dimensional Lie group admitting a non-trivial left-invariant Rarita-Schwinger field is the abelian Lie group $\mathbb R^2$.
\end{theorem}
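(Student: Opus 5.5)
The proof combines two ingredients: the classification of connected, simply connected $2$-dimensional Lie groups, and the two computations already carried out above. Connected, simply connected Lie groups are determined up to isomorphism by their Lie algebras. There are exactly two real $2$-dimensional Lie algebras: the abelian one $\mathbb R^2$ and $\mathfrak{aff}(\mathbb R)$. It therefore suffices to decide, for each of these and for every inner product on it, whether a non-trivial left-invariant Rarita-Schwinger field exists. By Remark \ref{rem: metric Lie algebra}, this is a question purely about the metric Lie algebra.

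In the abelian case, I will invoke Remark \ref{rem: abel lie algebra}. Since all $de_i=0$, Proposition \ref{prop: twisted-dirac-left-invariant} gives $D_{TM}\Psi=0$ for every left-invariant vector spinor. Hence every element of the kernel of the contraction $\Pi$ on $\Delta_2\otimes\mathbb C^2$ is a Rarita-Schwinger field. This kernel is nonzero: $\psi_1$ may be chosen freely and one sets $\psi_2=e_2\cdot e_1\cdot\psi_1$. Let me check that this satisfies $\Pi\Psi=0$. Using $e_i\cdot e_i=-1$,
\[
e_1\cdot\psi_1+e_2\cdot e_2\cdot e_1\cdot\psi_1=e_1\cdot\psi_1-e_1\cdot\psi_1=0 .
\]
So $\mathbb R^2$ admits non-trivial examples, and these are parallel.

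In the non-abelian case, the first step is to normalise the inner product on $\mathfrak{aff}(\mathbb R)$. The derived algebra is one-dimensional. I take $e_2$ to be a unit vector spanning it and $e_1$ a unit vector orthogonal to it, flipping the sign of $e_1$ if necessary. Then $[e_1,e_2]=ae_2$ with $a>0$, where $a\neq0$ because the algebra is non-abelian.

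Next I use the computation above. For a spin-$3/2$ spinor it gives $-4D_{TM}\Psi=6a\,(e_1\cdot\psi_1\otimes e_1+e_2\cdot\psi_1\otimes e_2)$. Clifford multiplication by a unit vector is invertible and $a\neq0$, so $D_{TM}\Psi=0$ forces $\psi_1=0$, and hence $\psi_2=e_2\cdot e_1\cdot\psi_1=0$. Combining the two cases gives the theorem.

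The only step that needs care is the spin-$3/2$ substitution in the $\mathfrak{aff}(\mathbb R)$ computation. Here I would verify the identities $e_2\cdot\psi_2=e_2e_2e_1\psi_1=-e_1\cdot\psi_1$ and $e_1\cdot\psi_2=e_1e_2e_1\psi_1=e_2\cdot\psi_1$. These turn $e_1\cdot\psi_1-2e_2\cdot\psi_2$ into $3e_1\cdot\psi_1$ and $e_1\cdot\psi_2+2e_2\cdot\psi_1$ into $3e_2\cdot\psi_1$. With these checked, the coefficient $6a$ is confirmed nonzero, and the rest of the argument is immediate.
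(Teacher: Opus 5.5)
Your proposal is correct and follows the paper's argument. You reduce to the metric Lie algebras $\mathbb R^2$ and $\mathfrak{aff}(\mathbb R)$ with $[e_1,e_2]=ae_2$, and observe that $D_{TM}$ vanishes on left-invariant vector spinors in the abelian case; in the non-abelian case the substitution $\psi_2=e_2\cdot e_1\cdot\psi_1$ gives $-4D_{TM}\Psi=6a(e_1\cdot\psi_1\otimes e_1+e_2\cdot\psi_1\otimes e_2)$, which forces $\psi_1=0$. Your explicit checks that $\ker\Pi\neq 0$ in the abelian case and that the orthonormal normal form exists with $a\neq0$ are small details the paper leaves implicit.
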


\let\thetheorem\temp
\addtocounter{theorem}{-1}

\subsection{The left-invariant Rarita-Schwinger equation on $3$-dimensional Lie groups} \label{Left-invariant Rarita-Schwinger fields on 3-dimensional Lie groups}
Let $(e_1,e_2,e_3)$ be an orthonormal basis of a $3$-dimensional metric Lie algebra $(\mathfrak g,\langle\cdot,\cdot\rangle)$, and write
\begin{equation} \label{eq: exterior derivative on 3-dim lie alg}
de_1=\sum_{i<j}a_{ij}e_i \wedge e_j, \quad de_2=\sum_{i<j}b_{ij}e_i \wedge e_j, \quad de_3=\sum_{i<j}c_{ij}e_i \wedge e_j,
\end{equation}
where $a_{ij},b_{ij},c_{ij}\in\mathbb R$. Using this notation and Proposition~\ref{prop: twisted-dirac-left-invariant}, the action of the twisted Dirac operator on a left-invariant vector spinor $\displaystyle \Psi=\sum_{i=1}^3\psi_i\otimes e_i$ can be written as
\begin{equation}
-4D_{TM}\Psi=-4(D_{TM}\Psi)_1\otimes e_1-4(D_{TM}\Psi)_2\otimes e_2-4(D_{TM}\Psi)_3\otimes e_3,
\end{equation}
where the components are given by
\begin{align*}
-4(D_{TM}\Psi)_1=&(a_{23}-b_{13}+c_{12})(e_1\wedge e_2\wedge e_3)\cdot\psi_1 \\
&+2(-b_{12}e_1-c_{13}e_1+a_{12}e_2-c_{23}e_2+a_{13}e_3+b_{23}e_3)\cdot\psi_1 \\
&+2(2a_{12}e_1+2b_{12}e_2-a_{23}e_3+b_{13}e_3+c_{12}e_3)\cdot\psi_2 \\
&+2(2a_{13}e_1+a_{23}e_2+b_{13}e_2+c_{12}e_2+2c_{13}e_3)\cdot\psi_3, \\
-4(D_{TM}\Psi)_2=&(a_{23}-b_{13}+c_{12})(e_1\wedge e_2\wedge e_3)\cdot\psi_2 \\
&+2(-2a_{12}e_1-2b_{12}e_2+a_{23}e_3-b_{13}e_3-c_{12}e_3)\cdot\psi_1 \\
&+2(-b_{12}e_1-c_{13}e_1+a_{12}e_2-c_{23}e_2+a_{13}e_3+b_{23}e_3)\cdot\psi_2 \\
&+2(a_{23}e_1+b_{13}e_1-c_{12}e_1+2b_{23}e_2+2c_{23}e_3)\cdot\psi_3, \\
-4(D_{TM}\Psi)_3=&(a_{23}-b_{13}+c_{12})(e_1\wedge e_2\wedge e_3)\cdot\psi_3 \\
&+2(-2a_{13}e_1-a_{23}e_2-b_{13}e_2-c_{12}e_2-2c_{13}e_3)\cdot\psi_1 \\
&+2(-a_{23}e_1-b_{13}e_1+c_{12}e_1-2b_{23}e_2-2c_{23}e_3)\cdot\psi_2 \\
&+2(-b_{12}e_1-c_{13}e_1+a_{12}e_2-c_{23}e_2+a_{13}e_3+b_{23}e_3)\cdot\psi_3.
\end{align*}
Using the spin-$3/2$ condition $\psi_3=e_3\cdot e_1\cdot\psi_1+e_3\cdot e_2\cdot\psi_2$, we can rewrite $-4(D_{TM}\Psi)_1$, $-4(D_{TM}\Psi)_2$, and $-4(D_{TM}\Psi)_3$ as follows:
\begin{equation} \label{eq: twisted Dirac for left-inv spin-3/2 on 3-dim}
\begin{split}
-4(D_{TM}\Psi)_1=&(3a_{23}+b_{13}+3c_{12})(e_1\wedge e_2\wedge e_3)\cdot\psi_1 \\
&+2(-b_{12}e_1-3c_{13}e_1+a_{12}e_2-c_{23}e_2+3a_{13}e_3+b_{23}e_3)\cdot\psi_1 \\
&-4a_{13}(e_1\wedge e_2\wedge e_3)\cdot\psi_2 \\
&+2(2a_{12}e_1+2b_{12}e_2-2c_{13}e_2+2b_{13}e_3+2c_{12}e_3)\cdot\psi_2, \\
-4(D_{TM}\Psi)_2=& 4b_{23}(e_1\wedge e_2\wedge e_3)\cdot\psi_1 \\
&+2(-2a_{12}e_1-2c_{23}e_1-2b_{12}e_2+2a_{23}e_3-2c_{12}e_3)\cdot\psi_1 \\
&+(-a_{23}-3b_{13}+3c_{12})(e_1\wedge e_2\wedge e_3)\cdot\psi_2 \\
&+2(-b_{12}e_1-c_{13}e_1+a_{12}e_2-3c_{23}e_2+a_{13}e_3+3b_{23}e_3)\cdot\psi_2, \\
-4(D_{TM}\Psi)_3=& 2(a_{12}-c_{23})(e_1\wedge e_2\wedge e_3)\cdot\psi_1 \\
&+(-6a_{13}e_1-2b_{23}e_1-3a_{23}e_2-b_{13}e_2-3c_{12}e_2-2b_{12}e_3-6c_{13}e_3)\cdot\psi_1 \\
&+2(b_{12}+c_{13})(e_1\wedge e_2\wedge e_3)\cdot\psi_2 \\
&+(-a_{23}e_1-3b_{13}e_1+3c_{12}e_1-2a_{13}e_2-6b_{23}e_2+2a_{12}e_3-6c_{23}e_3)\cdot\psi_2.
\end{split}
\end{equation}

From now on, we identify the $3$-dimensional complex spin representation with $\Delta_3\cong\mathbb C^2$ and use the following Pauli-matrix realization of Clifford multiplication with respect to a positively oriented orthonormal basis $(e_1,e_2,e_3)$:
\[
e_1\cdot=
\begin{pmatrix}
0&i\\
i&0
\end{pmatrix},
\qquad
e_2\cdot=
\begin{pmatrix}
0&1\\
-1&0
\end{pmatrix},
\qquad
e_3\cdot=
\begin{pmatrix}
i&0\\
0&-i
\end{pmatrix}.
\]
Then $(e_1\wedge e_2\wedge e_3)\cdot=I$. Moreover, for $x,y\in\mathbb R$,
\[
xI+ye_3\cdot=
\begin{pmatrix}
x+iy&0\\
0&x-iy
\end{pmatrix},
\]
and therefore
\begin{equation} \label{eq: determinant of xI plus ye3}
\det(xI+ye_3\cdot)=x^2+y^2.
\end{equation}

\subsection{Left-invariant Rarita-Schwinger fields on 3-dimensional unimodular Lie groups} \label{Left-invariant Rarita-Schwinger fields on 3-dimensional unimodular Lie groups}
We determine which $3$-dimensional unimodular metric Lie algebras admit non-trivial left-invariant Rarita-Schwinger fields. We first recall Milnor's normal form for $3$-dimensional unimodular metric Lie algebras.

\begin{theorem}[\cite{Milnor}, Lemma 4.1]
Let $\mathfrak g$ be a $3$-dimensional unimodular Lie algebra. Then, for every inner product on $\mathfrak g$, there exist an orthonormal basis $(e_1,e_2,e_3)$ and real numbers
$\alpha,\beta,\gamma \in \mathbb{R}$ satisfying
\begin{equation} \label{eq: Milnor frame}
[e_1,e_2]=\alpha e_3,\qquad [e_2,e_3]=\beta e_1,\qquad [e_3,e_1]=\gamma e_2.
\end{equation}
\end{theorem}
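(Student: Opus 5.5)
We will follow Milnor's argument, which turns the bracket into a linear endomorphism of $\mathfrak g$ and then diagonalizes it. Fix an inner product $\langle\cdot,\cdot\rangle$ on $\mathfrak g$ and an orientation. Let $\times$ be the associated cross product on $\mathfrak g\cong\mathbb R^3$. It is bilinear and skew, and for a positively oriented orthonormal basis it satisfies $e_1\times e_2=e_3$, $e_2\times e_3=e_1$, $e_3\times e_1=e_2$. Since the Lie bracket is also bilinear and skew, and $u\times v$ ranges over all of $\Lambda^2\mathfrak g\cong\mathfrak g$, there is a unique linear map $L:\mathfrak g\to\mathfrak g$ with
\[
[u,v]=L(u\times v)\qquad\text{for all }u,v\in\mathfrak g .
\]
Concretely, $L$ is defined on a positively oriented orthonormal basis by $L(e_3)=[e_1,e_2]$, $L(e_1)=[e_2,e_3]$ and $L(e_2)=[e_3,e_1]$. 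One checks that this does not depend on the chosen basis, because $\times$ is $\SO(3)$-equivariant.

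The key step is to show that $\mathfrak g$ is unimodular if and only if $L$ is self-adjoint. Take any positively oriented orthonormal basis and compute the traces:
\[
\tr(\ad_{e_1})=\langle[e_1,e_2],e_2\rangle+\langle[e_1,e_3],e_3\rangle=\langle L e_3,e_2\rangle-\langle L e_2,e_3\rangle .
\]
Cyclically, $\tr(\ad_{e_2})=\langle Le_1,e_3\rangle-\langle Le_3,e_1\rangle$ and $\tr(\ad_{e_3})=\langle Le_2,e_1\rangle-\langle Le_1,e_2\rangle$. These three quantities are exactly the three independent off-diagonal components of the skew part $L-L^{*}$. Hence $\tr\circ\ad\equiv0$ is equivalent to $L=L^{*}$.

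Now assume $\mathfrak g$ is unimodular. By the spectral theorem, $L$ has an orthonormal eigenbasis $(e_1,e_2,e_3)$, and replacing $e_3$ by $-e_3$ if necessary we may take it positively oriented. Write $Le_3=\alpha e_3$, $Le_1=\beta e_1$ and $Le_2=\gamma e_2$. Then
\[
[e_1,e_2]=L(e_1\times e_2)=L(e_3)=\alpha e_3,
\]
and in the same way $[e_2,e_3]=\beta e_1$ and $[e_3,e_1]=\gamma e_2$, which is the form (\ref{eq: Milnor frame}).

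The only point needing care is the trace computation and its identification with the skew part of $L$. It is a short index check, and I expect no genuine obstacle there.
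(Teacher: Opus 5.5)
Your proof is correct and is essentially Milnor's original argument: you factor the bracket as $[u,v]=L(u\times v)$, identify unimodularity with self-adjointness of $L$ through the trace computation, and then apply the spectral theorem. The paper gives no proof of its own and simply cites Milnor's Lemma 4.1, so your argument matches the intended source.
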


This theorem yields the classification of $3$-dimensional unimodular real Lie algebras. After suitably permuting the basis and choosing its orientation, the sign patterns of $(\alpha,\beta,\gamma)$ correspond as follows: $(+++)$ to $\mathfrak{su}(2)$, $(++-)$ to $\mathfrak{sl}(2,\mathbb R)$, $(++0)$ to the Euclidean motion Lie algebra $\mathfrak e(2)$, $(+-0)$ to the Lorentzian motion Lie algebra $\mathfrak e(1,1)$, $(+00)$ to the Heisenberg Lie algebra $\mathfrak{heis}_3$, and $(000)$ to the abelian Lie algebra $\mathbb R^3$.

For a Milnor frame $(e_1,e_2,e_3)$ satisfying $(\ref{eq: Milnor frame})$, the Maurer-Cartan formula $(\ref{eq: maurer-cartan})$ gives
\[
de_1=-\beta e_2\wedge e_3,\qquad de_2=\gamma e_1\wedge e_3,\qquad de_3=-\alpha e_1\wedge e_2.
\]
Thus, in the notation of $(\ref{eq: exterior derivative on 3-dim lie alg})$,
\[
a_{23}=-\beta, \qquad b_{13}=\gamma, \qquad c_{12}=-\alpha,
\]
and all other components vanish. Substituting these expressions into $(\ref{eq: twisted Dirac for left-inv spin-3/2 on 3-dim})$, we obtain
\begin{align*}
-4(D_{TM}\Psi)_1=&(-3\alpha-3\beta+\gamma)\psi_1+4(-\alpha+\gamma)e_3\cdot\psi_2, \\
-4(D_{TM}\Psi)_2=&4(\alpha-\beta)e_3\cdot\psi_1+(-3\alpha+\beta-3\gamma)\psi_2, \\
-4(D_{TM}\Psi)_3=&(3\alpha+3\beta-\gamma)e_2\cdot\psi_1+(-3\alpha+\beta-3\gamma)e_1\cdot\psi_2.
\end{align*}
Setting $r=-3\alpha-3\beta+\gamma$, $s=-3\alpha+\beta-3\gamma$, $u=-\alpha+\gamma$, and $v=\alpha-\beta$, the left-invariant Rarita-Schwinger equation reduces to
\begin{equation}\label{eq: unimodular RS system}
\begin{cases}
r\psi_1+4ue_3\cdot\psi_2=0,\\
4ve_3\cdot\psi_1+s\psi_2=0,\\
-re_2\cdot\psi_1+se_1\cdot\psi_2=0.
\end{cases}
\end{equation}

\subsubsection{$\mathfrak{heis}_3$}
For $\mathfrak{heis}_3$, we have
\[
\alpha>0,\qquad \beta=\gamma=0.
\]
Hence
\[
r=s=-3\alpha,\qquad u=-\alpha,\qquad v=\alpha,
\]
and the left-invariant Rarita-Schwinger equation $(\ref{eq: unimodular RS system})$ becomes
\begin{equation}\label{eq: unimodular RS system on heis}
\begin{cases}
-3\alpha\psi_1-4\alpha e_3\cdot\psi_2=0,\\
4\alpha e_3\cdot\psi_1-3\alpha \psi_2=0,\\
3\alpha e_2\cdot\psi_1-3\alpha e_1\cdot\psi_2=0
\end{cases}
\end{equation}
The first equation of $(\ref{eq: unimodular RS system on heis})$ gives
\[
\psi_1=-\dfrac{4}{3}e_3\cdot\psi_2.
\]
Substituting this into the second equation of $(\ref{eq: unimodular RS system on heis})$ yields
\[
4e_3\cdot\left(-\dfrac{4}{3}e_3\cdot\psi_2\right)-3\psi_2=0.
\]
After simplification, this gives $\psi_2=0$. It follows immediately that $\psi_1=0$. Therefore, no non-trivial solution exists.

\subsubsection{$\mathfrak{e}(2)$}
For $\mathfrak e(2)$, we have
\[
\alpha>0,\qquad \beta>0, \qquad \gamma=0.
\]
Hence
\[
r=-3\alpha-3\beta,\qquad s=-3\alpha+\beta,\qquad u=-\alpha,\qquad v=\alpha-\beta.
\]
Since $\alpha>0$ and $\beta>0$, we have $r=-3\alpha-3\beta<0$, and in particular $r\neq0$. Thus, the first equation of $(\ref{eq: unimodular RS system})$ can be rewritten as
\[
\psi_1=-\dfrac{4u}{r}e_3\cdot\psi_2.
\]
Substitution into the third equation of $(\ref{eq: unimodular RS system})$ gives
\[
4u e_2e_3\cdot\psi_2+se_1\cdot\psi_2=0.
\]
Using $e_2e_3\cdot=-e_1\cdot$, we obtain
\[
(-4u+s)e_1\cdot\psi_2=0.
\]
Since $\alpha>0$ and $\beta>0$,
\[
-4u+s=4\alpha+(-3\alpha+\beta)=\alpha+\beta>0,
\]
and hence $-4u+s\neq0$. Therefore, $\psi_2=0$, and consequently $\psi_1=0$. Thus, no non-trivial solution exists.

\subsubsection{$\mathfrak{e}(1,1)$}
For $\mathfrak e(1,1)$, we have
\[
\alpha>0,\qquad \beta<0, \qquad \gamma=0.
\]
Hence
\[
r=-3\alpha-3\beta,\qquad s=-3\alpha+\beta,\qquad u=-\alpha,\qquad v=\alpha-\beta.
\]
\begin{enumerate}
\renewcommand{\labelenumi}{(\arabic{enumi})}
\item Suppose that $r\neq0$. The first equation of $(\ref{eq: unimodular RS system})$ gives
\[
\psi_1=\dfrac{4\alpha}{r}e_3\cdot\psi_2.
\]
Substituting this into the third equation of $(\ref{eq: unimodular RS system})$ yields
\[
4u e_2e_3\cdot\psi_2+se_1\cdot\psi_2=0.
\]
Therefore,
\[
(-4u+s)e_1\cdot\psi_2=0.
\]
Since $r\neq0$,
\[
-4u+s=4\alpha+(-3\alpha+\beta)=\alpha+\beta=-\dfrac{1}{3}r\neq0.
\]
Hence $\psi_2=0$, and consequently $\psi_1=0$. Thus, no non-trivial solution exists in this case.
\item Suppose that $r=0$. Then $\beta=-\alpha$. Hence $s=-4\alpha$ and $v=2\alpha$, so $(\ref{eq: unimodular RS system})$ becomes
\[
\begin{cases}
-4\alpha e_3\cdot\psi_2=0, \\
8\alpha e_3\cdot\psi_1-4\alpha \psi_2=0, \\
-4\alpha e_1\cdot\psi_2=0.
\end{cases}
\]
The first equation gives $\psi_2=0$, and the second then gives $\psi_1=0$. Thus, no non-trivial solution exists in this case either.
\end{enumerate}

\subsubsection{$\mathfrak{sl}(2,\mathbb{R})$}
For $\mathfrak{sl}(2,\mathbb R)$, we have
\[
\alpha>0,\qquad \beta>0, \qquad \gamma<0.
\]
Thus $r=-3\alpha-3\beta+\gamma<0$, and in particular $r\neq0$. The first equation of $(\ref{eq: unimodular RS system})$ can therefore be rewritten as
\[
\psi_1=-\dfrac{4u}{r}e_3\cdot\psi_2.
\]
Substituting this into the third equation of $(\ref{eq: unimodular RS system})$, we obtain
\[
4ue_2e_3\cdot\psi_2+se_1\cdot\psi_2=0.
\]
Therefore,
\[
(-4u+s)e_1\cdot\psi_2=0.
\]
Since $\alpha>0$, $\beta>0$, and $\gamma<0$,
\[
-4u+s=-4(-\alpha+\gamma)+(-3\alpha+\beta-3\gamma)=\alpha+\beta-7\gamma>0,
\]
and hence $-4u+s\neq0$. Therefore, $\psi_2=0$. Substituting this into the first equation of $(\ref{eq: unimodular RS system})$ and using $r\neq0$, we obtain $\psi_1=0$. Thus, no non-trivial solution exists.

\subsubsection{$\mathfrak{su}(2)$} \label{subsubsec: su(2)} 
For $\mathfrak{su}(2)$, we have
\[
\alpha>0,\qquad \beta>0, \qquad \gamma>0.
\]
\begin{enumerate}
\renewcommand{\labelenumi}{(\arabic{enumi})}
\item Suppose that $r\neq0$. The first equation of $(\ref{eq: unimodular RS system})$ gives
\[
\psi_1=-\dfrac{4u}{r}e_3\cdot \psi_2.
\] 
Substituting this into the second equation of $(\ref{eq: unimodular RS system})$, we obtain
\[
\left(s+\dfrac{16uv}{r}\right)\psi_2=0.
\]
Thus, a necessary condition for the existence of a non-trivial solution is $sr+16uv=0$. On the other hand, substituting into the third equation of $(\ref{eq: unimodular RS system})$ gives
\[
4ue_2e_3\cdot\psi_2+se_1\cdot\psi_2=0.
\]
Therefore,
\[
(-4u+s)e_1\cdot\psi_2=0,
\]
and hence a necessary condition for the existence of a non-trivial solution is $s=4u$. That is,
\[
\alpha+\beta=7\gamma.
\]
Furthermore, substituting $s=4u$ into $sr+16uv=0$ gives $4u(r+4v)=0$. Hence
\[
u=0 \qquad \text{or} \qquad r+4v=0.
\]
\begin{enumerate}
\renewcommand{\labelenumii}{(\roman{enumii})}
\item If $u=0$, then $\gamma=\alpha$. The relation $\alpha+\beta=7\gamma$ then implies $\beta=6\alpha$. In this case $\psi_1=0$, while $\psi_2$ is arbitrary, so non-trivial solutions exist.
\item If $r+4v=0$, then
\[
-3\alpha-3\beta+\gamma+4(\alpha-\beta)=0,
\]
which is equivalent to
\[
\alpha+\gamma=7\beta.
\]
Together with $\alpha+\beta=7\gamma$, this yields $\alpha=6\gamma$ and $\beta=\gamma$. In this case $\psi_2$ is arbitrary and $\psi_1=-e_3\cdot\psi_2$, so non-trivial solutions exist.
\end{enumerate}
\item Suppose that $r=0$. Then $\gamma=3\alpha+3\beta$. The first equation of $(\ref{eq: unimodular RS system})$ becomes $4ue_3\cdot\psi_2=0$. Since
\[
u=-\alpha+\gamma=2\alpha+3\beta>0,
\]
we obtain $\psi_2=0$. The second equation of $(\ref{eq: unimodular RS system})$ is $4ve_3\cdot\psi_1=0$, and the existence of a non-trivial solution therefore requires $v=0$. Thus $\alpha=\beta$ and $\gamma=3\alpha+3\beta=6\alpha$. In this case $\psi_2=0$, while $\psi_1$ is arbitrary, so non-trivial solutions exist.
\end{enumerate}
Combining the above cases, we conclude that non-trivial left-invariant Rarita-Schwinger fields exist if and only if 
\[
(\alpha,\beta,\gamma)=(t,t,6t), \quad (t,6t,t), \quad \text{or} \quad (6t,t,t), \quad t>0.
\]
In each of these cases, the space of left-invariant Rarita-Schwinger fields has complex dimension $2$. The corresponding metrics will be described in Subsection \ref{subsec: Berger sphere}.

\subsection{Left-invariant Rarita-Schwinger fields on 3-dimensional non-unimodular Lie groups}
The purpose of this subsection is to classify left-invariant Rarita-Schwinger fields on $3$-dimensional non-unimodular Lie groups. We first recall the standard form of a three-dimensional non-unimodular metric Lie algebra needed for the computation; see \cite[p.~321]{Milnor} and \cite[Section~2.5]{MeeksPerez} for a more detailed discussion.

Let $(\mathfrak g,\langle\cdot,\cdot\rangle)$ be a $3$-dimensional non-unimodular metric Lie algebra. Then
\[
\mathfrak{u}\coloneqq \left\{X\in\mathfrak{g}\mid \tr(\ad_X)=0\right\}
\]
is a $2$-dimensional abelian ideal of $\mathfrak g$, called the unimodular kernel. Let $(e_1,e_2)$ be an orthonormal basis of $\mathfrak u$ and let $e_3$ be a unit vector in $\mathfrak u^\perp$. Then, for some real $2\times2$ matrix $A$,
\[
[e_1,e_2]=0,\qquad \ad_{e_3}|_{\mathfrak{u}}=A.
\]

By the standard form for $3$-dimensional non-unimodular metric Lie algebras \cite[p.321]{Milnor} (see also \cite[Section 2.5]{MeeksPerez}), after rescaling the metric by a positive constant if necessary and suitably changing the orthonormal basis of $\mathfrak u$, we may assume that
\[
A=\begin{pmatrix}
1+a&-(1-a)b\\
(1+a)b&1-a
\end{pmatrix},\qquad a,b\geq0.
\]
Consequently,
\begin{equation} \label{eq: orthogonal basis for non-unimodular lie alg}
\begin{split}
[e_1,e_2]&=0,\\
[e_3,e_1]&=(1+a)e_1+(1+a)be_2, \\
[e_3,e_2]&=-(1-a)be_1+(1-a)e_2.
\end{split}
\end{equation}

Set $c\coloneqq\det A=(1-a^2)(1+b^2)$. If $a=b=0$, equivalently $A=\Id$, then the corresponding Lie algebra is
\[
\mathbb{R}^2\rtimes_{\Id}\mathbb{R}.
\]
On the other hand, if $(a,b)\neq(0,0)$, then $A$ is similar over $\mathbb R$ to
\[
D(c)=\begin{pmatrix}
0&-c \\
1&2
\end{pmatrix}
\]
and therefore the corresponding Lie algebra is isomorphic to
\[
\mathfrak{g}(c)\coloneqq\mathbb{R}^2\rtimes_{D(c)}\mathbb{R}.
\]
Thus, to study $3$-dimensional non-unimodular metric Lie algebras, it is enough to consider the following two cases:
\[
\mathbb{R}^2\rtimes_{\Id}\mathbb{R}, \qquad \mathfrak{g}(c).
\]

Although the cases $(a,b)=(0,0)$ and $(a,b)\neq(0,0)$ have been distinguished above, the following computation applies uniformly for all $a,b\geq 0$. With the above normalization, choose an orthonormal basis $(e_1,e_2,e_3)$ satisfying $(\ref{eq: orthogonal basis for non-unimodular lie alg})$. By the Maurer-Cartan formula $(\ref{eq: maurer-cartan})$,
\[
de_1=(1+a)e_1\wedge e_3-(1-a)be_2\wedge e_3, \qquad de_2=(1+a)be_1\wedge e_3+(1-a)e_2\wedge e_3, \qquad de_3=0.
\]
Thus, in the notation of $(\ref{eq: exterior derivative on 3-dim lie alg})$,
\[
a_{13}=1+a, \qquad a_{23}=-(1-a)b, \qquad b_{13}=(1+a)b, \qquad b_{23}=1-a,
\]
and all other components vanish. Substituting these expressions into $(\ref{eq: twisted Dirac for left-inv spin-3/2 on 3-dim})$, we obtain
\begin{align*}
-4(D_{TM}\Psi)_1=&\left(-2(1-2a)b+4(2+a)e_3\cdot\right)\psi_1+\left(-4(1+a)+4(1+a)be_3\cdot\right)\psi_2, \\
-4(D_{TM}\Psi)_2=&\left(4(1-a)-4(1-a)be_3\cdot\right)\psi_1+\left(-2(1+2a)b+4(2-a)e_3\cdot\right)\psi_2, \\
-4(D_{TM}\Psi)_3=&\left(2(1-2a)be_2\cdot-4(2+a)e_1\cdot\right)\psi_1+\left(-4(2-a)e_2\cdot-2(1+2a)be_1\cdot\right)\psi_2.
\end{align*}
Set $A_1=-2(1-2a)bI+4(2+a)e_3\cdot$, $B_1=-4(1+a)I+4(1+a)be_3\cdot$, $A_2=4(1-a)I-4(1-a)be_3\cdot$, $B_2=-2(1+2a)bI+4(2-a)e_3\cdot$, $A_3=2(1-2a)be_2\cdot-4(2+a)e_1\cdot$, and $B_3=-4(2-a)e_2\cdot-2(1+2a)be_1\cdot$. Then the left-invariant Rarita-Schwinger equation reduces to
\begin{equation}\label{eq: non-unimodular RS system}
\begin{cases}
A_1\psi_1+B_1\psi_2=0,\\
A_2\psi_1+B_2\psi_2=0,\\
A_3\psi_1+B_3\psi_2=0.
\end{cases}
\end{equation}
We now determine whether this system admits a non-trivial solution. By $(\ref{eq: determinant of xI plus ye3})$,
\[
\det A_1=\left\{-2(1-2a)b\right\}^2+\left\{4(2+a)\right\}^2 \geq \left\{4(2+a)\right\}^2>0,
\]
and hence $A_1$ is invertible, with
\begin{align*}
A_1^{-1}=&\dfrac{1}{\det A_1}\left(-2(1-2a)b I-4(2+a)e_3\cdot\right).
\end{align*}
The first equation of $(\ref{eq: non-unimodular RS system})$ gives
\[
\psi_1=-A_1^{-1}B_1\psi_2.
\]
Substituting this into the second equation of $(\ref{eq: non-unimodular RS system})$, we obtain
\[
(B_2-A_2A_1^{-1}B_1)\psi_2=0.
\]
We examine whether the matrix $B_2-A_2A_1^{-1}B_1$ is invertible. First,
\begin{align*}
A_1^{-1}B_1=&\dfrac{1}{\det A_1}\left(-2(1-2a)b I-4(2+a)e_3\cdot \right)\left(-4(1+a)I+4(1+a)be_3\cdot\right) \\
=&\dfrac{8(1+a)}{\det A_1}\left(5b I+\left(-(1-2a)b^2+2(2+a)\right)e_3\cdot \right).
\end{align*}
Therefore,
\begin{align*}
A_2A_1^{-1}B_1=&\dfrac{8(1+a)}{\det A_1}\left(4(1-a)I-4(1-a)be_3\cdot\right)\left(5b I+\left(-(1-2a)b^2+2(2+a)\right)e_3\cdot \right) \\
=&\dfrac{32(1-a^2)}{\det A_1}\left(\left(-(1-2a)b^3+(9+2a)b\right)I+\left(-2(3-a)b^2+2(2+a)\right)e_3\cdot\right).
\end{align*}
It follows that
\begin{align*}
&B_2-A_2A_1^{-1}B_1 \\
=&-2(1+2a)bI+4(2-a)e_3\cdot \\
&-\dfrac{32(1-a^2)}{\det A_1}\left(\left(-(1-2a)b^3+(9+2a)b\right)I+\left(-2(3-a)b^2+2(2+a)\right)e_3\cdot\right) \\
=&\dfrac{1}{\det A_1}\left[-8(1-2a)^2(1+2a)b^3I-32(2+a)^2(1+2a)bI \right. \\
&\left. +32(1-a^2)(1-2a)b^3I-32(1-a^2)(9+2a)bI \right. \\
&\left. +16(1-2a)^2(2-a)b^2e_3\cdot + 64(2+a)^2(2-a)e_3\cdot \right. \\
&\left. +32(1-a^2)(-6+2a)b^2e_3\cdot - 64(1-a^2)(2+a)e_3\cdot\right] \\
=&\dfrac{8b}{\det A_1}\left[ (3-6a)b^2-(52+56a) \right]I+\dfrac{16}{\det A_1}\left[ (14-13a)b^2+(24+12a) \right]e_3\cdot.
\end{align*}
Consequently, by $(\ref{eq: determinant of xI plus ye3})$,
\begin{align*}
&\det(B_2-A_2A_1^{-1}B_1) \\
=&\left(\dfrac{8b}{\det A_1}\left[ (3-6a)b^2-(52+56a) \right]\right)^2+\left(\dfrac{16}{\det A_1}\left[ (14-13a)b^2+(24+12a) \right]\right)^2 \\
=&\frac{16(b^2+36)(9b^2+4)}{\det A_1}>0.
\end{align*}
Thus $B_2-A_2A_1^{-1}B_1$ is invertible. Hence $(B_2-A_2A_1^{-1}B_1)\psi_2=0$ implies $\psi_2=0$, and no non-trivial solution exists.

\subsection{The Berger sphere example} \label{subsec: Berger sphere}
In the $\mathfrak{su}(2)$ case, we showed that the Milnor constants for which non-trivial left-invariant Rarita-Schwinger fields exist are
\[
(\alpha,\beta,\gamma)=(t,t,6t),\quad (t,6t,t),\quad (6t,t,t), \qquad t>0.
\]
In this subsection, we describe the corresponding left-invariant metrics on $\SU(2)\cong S^3$.

Let $(e_1,e_2,e_3)$ be an orthonormal Milnor frame satisfying
\[
[e_1,e_2]=\alpha e_3,\qquad [e_2,e_3]=\beta e_1,\qquad [e_3,e_1]=\gamma e_2.
\]
Set
\[
x_1^2=\frac{4}{\alpha\gamma},\qquad x_2^2=\frac{4}{\alpha\beta},\qquad x_3^2=\frac{4}{\beta\gamma},
\]
and define
\[
E_i=x_i e_i\qquad (i=1,2,3).
\]
Then
\[
[E_1,E_2]=2E_3,\qquad [E_2,E_3]=2E_1,\qquad [E_3,E_1]=2E_2,
\]
so $(E_1,E_2,E_3)$ is a standard basis of $\mathfrak{su}(2)$. Since $(e_1,e_2,e_3)$ is orthonormal with respect to $g$, the matrix representing $g$ with respect to this standard basis is
\[
\operatorname{diag}(x_1^2,x_2^2,x_3^2).
\]

For example, if $(\alpha,\beta,\gamma)=(t,t,6t)$, then
\[
g=\frac{2}{3t^2}\operatorname{diag}(1,6,1).
\]
Similarly, in the other two cases we obtain, respectively,
\[
g=\frac{2}{3t^2}\operatorname{diag}(6,1,1), \qquad g=\frac{2}{3t^2}\operatorname{diag}(1,1,6).
\]
These metrics are transformed into one another by cyclic permutations of the standard basis, that is, by automorphisms of $\mathfrak{su}(2)$.

A left-invariant metric represented with respect to the standard basis by
\[
\operatorname{diag}(\varepsilon,1,1), \qquad \varepsilon>0,
\]
is called a Berger metric \cite{GadeaOubina}. Therefore, up to an automorphism of the Lie group and multiplication by a positive constant, the three metrics obtained above are represented by the Berger metric
\[
\operatorname{diag}(6,1,1).
\]
This metric is, of course, not Einstein.

Combining this with the abelian case discussed in Remark \ref{rem: abel lie algebra}, we obtain the second main theorem.
\let\temp\thetheorem
\renewcommand{\thetheorem}{\ref{thmB}}

\begin{theorem}
Let $(G,g)$ be a connected and simply connected $3$-dimensional Lie group endowed with a left-invariant Riemannian metric. If $G$ is abelian, then every left-invariant spin-$3/2$ spinor is parallel and hence defines a Rarita-Schwinger field. If $G$ is non-abelian, then a non-trivial left-invariant Rarita-Schwinger field exists if and only if $G\cong\SU(2)$ and, up to an automorphism of the Lie group and multiplication by a positive constant, $g$ is represented with respect to the standard basis by
\[
\begin{pmatrix}
6&0&0\\
0&1&0\\
0&0&1
\end{pmatrix}.
\]
In this case, the space of left-invariant Rarita-Schwinger fields has complex dimension $2$. Moreover, $g$ is a Berger metric on $\SU(2)\cong S^3$.
\end{theorem}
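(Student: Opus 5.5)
The plan is to assemble Theorem B from the case analysis already carried out in this section, after first reducing the classification to metric Lie algebras. Since $G$ is connected and simply connected, it is determined by $\mathfrak g$. By Remark \ref{rem: metric Lie algebra}, Proposition \ref{prop: twisted-dirac-left-invariant} turns the existence question for left-invariant Rarita-Schwinger fields into an algebraic question about $(\mathfrak g,\langle\cdot,\cdot\rangle)$.

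\emph{Abelian case.} Here $de_i=0$ for all $i$, so the Levi-Civita connection on left-invariant fields vanishes, since $\nabla_XY$ is expressed through the $de_j$. Hence the spin connection on left-invariant spinors is trivial as well. Every left-invariant vector spinor is therefore parallel, and in particular every left-invariant spin-$3/2$ spinor is parallel and lies in $\ker D_{TM}$, as in Remark \ref{rem: abel lie algebra}.

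\emph{Non-abelian case.} I would split according to whether $\mathfrak g$ is unimodular.
\begin{enumerate}
\item \textbf{Unimodular.} Milnor's normal form gives $(\alpha,\beta,\gamma)$ with one of the sign patterns $(+00)$, $(++0)$, $(+-0)$, $(++-)$, $(+++)$, after permuting and reorienting the basis. These patterns correspond to $\mathfrak{heis}_3$, $\mathfrak e(2)$, $\mathfrak e(1,1)$, $\mathfrak{sl}(2,\mathbb R)$ and $\mathfrak{su}(2)$. The subsections above show that the system $(\ref{eq: unimodular RS system})$ has only the trivial solution except for $\mathfrak{su}(2)$ with $(\alpha,\beta,\gamma)\in\{(t,t,6t),(t,6t,t),(6t,t,t)\}$, $t>0$.

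One point needs care: the sign-pattern analysis was done for a fixed ordering of the frame. I would note that a cyclic permutation of $(e_1,e_2,e_3)$ cyclically permutes $(\alpha,\beta,\gamma)$. Reversing orientation, for instance by replacing $e_3$ with $-e_3$ together with a transposition, negates the triple and reorders it. Since the existence of Rarita-Schwinger fields does not depend on the chosen frame, it suffices to treat one representative pattern per algebra. For $\mathfrak{su}(2)$ the case split $r\neq0$, $r=0$ covered all positive triples, so no representative is missed.
\item \textbf{Non-unimodular.} The normal form $(\ref{eq: orthogonal basis for non-unimodular lie alg})$ covers every such metric Lie algebra up to homothety, and homothety does not affect the kernel of $D_{TM}$. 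The Schur-complement determinant computation then shows $\psi_2=0$, hence $\psi_1=0$, so nothing survives.
\end{enumerate}

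\emph{Dimension count and the metric.} For each exceptional triple, the analysis in Subsubsection \ref{subsubsec: su(2)} gives a solution space parametrized freely by one spinor in $\Delta_3\cong\mathbb C^2$:
\begin{itemize}
\item $\psi_1=0$ with $\psi_2$ arbitrary;
\item $\psi_1=-e_3\cdot\psi_2$ with $\psi_2$ arbitrary;
\item $\psi_2=0$ with $\psi_1$ arbitrary.
\end{itemize}
In each case $\psi_3$ is determined by the spin-$3/2$ constraint. So the space of left-invariant Rarita-Schwinger fields has complex dimension $2$.

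To identify the metrics, I would rescale via $E_i=x_ie_i$ to a standard basis of $\mathfrak{su}(2)$. This gives $g=\frac{2}{3t^2}\operatorname{diag}$ of a permutation of $(1,1,6)$. Cyclic permutations of the standard basis are Lie algebra automorphisms, and they integrate to automorphisms of the simply connected group $\SU(2)$. So up to automorphism and scaling, $g=\operatorname{diag}(6,1,1)$, which is a Berger metric. Conversely, this metric corresponds to an exceptional Milnor triple, which gives the "if" direction.

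The main obstacle is not computational, since all the algebra is already done. It is the bookkeeping of completeness: every metric on each unimodular algebra must be captured by the sign-pattern representatives without losing exceptional triples through reordering. The Milnor frame for $\mathfrak{su}(2)$ allows arbitrary positive constants, and the analysis above was symmetric enough to find all three permutations, so I would check this explicitly.
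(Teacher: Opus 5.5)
Your proposal is correct and follows the paper's proof essentially step for step: the abelian case via the vanishing connection, the Milnor-frame case analysis for the unimodular algebras, the Schur-complement determinant for the non-unimodular normal form (your remark on homothety invariance makes explicit what the paper leaves implicit), and the rescaling $E_i=x_ie_i$ to identify $\operatorname{diag}(6,1,1)$. One small correction to your bookkeeping remark: replacing $e_3$ by $-e_3$ together with a transposition is orientation-preserving and merely permutes $(\alpha,\beta,\gamma)$ without changing signs (e.g.\ the frame $(e_2,e_1,-e_3)$ gives $(\alpha,\gamma,\beta)$), whereas the overall negation comes from an orientation reversal such as $e_3\mapsto -e_3$ alone, which is harmless because the reduced unimodular system is linear and homogeneous in $(r,s,u,v)$.
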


\let\thetheorem\temp
\addtocounter{theorem}{-1}

\begin{remark}
Although we restrict our main results to connected and simply connected Lie groups, the computation of left-invariant Rarita-Schwinger fields depends only on the corresponding metric Lie algebra. Therefore, the same Lie-algebraic computation applies to connected Lie groups that are not simply connected, provided that they are equipped with the spin structure induced by a left-invariant framing. In particular, a connected Lie group with Lie algebra $\mathfrak{su}(2)$ is isomorphic either to $\SU(2)$ or to $\SO(3)\cong \SU(2)/\{\pm I\}$. Hence the exceptional metric also yields a complex $2$-dimensional space of left-invariant Rarita-Schwinger fields on $\SO(3)$ with its spin structure induced by a left-invariant framing. The exceptional metric on $\mathbb{R}P^3\cong\SO(3)$ is the quotient of the Berger metric on $S^3\cong\SU(2)$.
\end{remark}

It is known from the work of Y. Homma and U. Semmelmann \cite{HommaSemmelmann} that the standard $3$-sphere with the round metric admits no Rarita-Schwinger fields. On the other hand, for the Berger sphere obtained here, the space of left-invariant Rarita-Schwinger fields has complex dimension $2$. This is the second known example in which the existence of
Rarita-Schwinger fields on the same manifold depends on the choice of metric. The first example, on $S^3\times S^3$, was obtained by S. Ohno and T. Tomihisa \cite{OhnoTomihisa}.

There are also known non-Einstein examples carrying Rarita-Schwinger fields in higher dimensions. In dimension $8$, harmonic $\Sp(1)\Sp(2)$-structures give rise to non-trivial Rarita-Schwinger fields; see F. Witt \cite{Witt}. Several compact non-Einstein examples of such structures are known. D. Conti and T. B. Madsen \cite{ContiMadsen} constructed non-Einstein harmonic $\Sp(2)\Sp(1)$-structures on $8$-dimensional nilmanifolds, and D. Conti, T. B. Madsen, and S. Salamon \cite{ContiMadsenSalamon} constructed non-Einstein harmonic $\Sp(2)\Sp(1)$-structures on $\G_2/\SO(4)$. Our result provides a low-dimensional compact homogeneous non-Einstein example carrying non-trivial left-invariant Rarita-Schwinger fields.

\section*{Acknowledgements}
The author would like to express his sincere gratitude to Professor Yasushi Homma and Natsuki Imada for valuable discussions and helpful comments. This work was supported by JSPS KAKENHI Grant Number JP24K06721.

\end{document}